\documentclass[11pt,a4paper,reqno]{amsart}
\usepackage[a4paper, left = 2cm,  hmargin={25mm,25mm},vmargin={25mm,25mm}]{geometry}
\usepackage[utf8]{inputenc}
\usepackage{geometry}
\usepackage[english]{babel}
\usepackage{graphicx}
\usepackage{color}
\usepackage{pdfpages}
\usepackage{amsbsy}
\usepackage{amssymb}
\usepackage{amsmath}
\usepackage{mathtools}
\usepackage{matlab-prettifier}
\usepackage{amsthm}
\usepackage{tikz-cd}
\usepackage{amsfonts}
\usepackage{array}
\usepackage{hyperref}
\usepackage[OT2,T1]{fontenc}

\DeclareSymbolFont{cyrletters}{OT2}{wncyr}{m}{n}
\DeclareMathSymbol{\Sha}{\mathalpha}{cyrletters}{"58}
\newcolumntype{P}[1]{>{\centering\arraybackslash}p{#1}}
\everymath{\displaystyle}
\newtheorem{theorem}{Theorem}[section]

\theoremstyle{definition}
\newtheorem{lemma}{Lemma}[section]

\theoremstyle{definition}
\newtheorem{defn}{Definition}[section]

\theoremstyle{definition}
\newtheorem{proposition}{Proposition}[section]

\newtheorem*{notation}{Notation}

\sloppy
\definecolor{lightgray}{gray}{0.5}
\setlength{\parindent}{0pt}

\newcommand\m[1]{\begin{pmatrix}#1\end{pmatrix}} 

\usepackage[backend=biber, maxnames=10]{biblatex}
\addbibresource{reference.bib}

\begin{document}

\title[A note on the twisted degree $6$ $L$-function for Hermitian cusp forms of degree $2$] 
{A note on the twisted degree $6$ $L$-function for Hermitian cusp forms of degree $2$}
\author{Rafail Psyroukis}
\address{Department of Mathematical Sciences\\ Durham University\\
South. Rd.\\ Durham, DH1 3LE, U.K.}
\email{rafail.psyroukis@durham.ac.uk}
\maketitle
\vspace{-1cm}
\begin{abstract}
Let $F$ be a cuspidal Hermitian eigenform of degree two over $\mathbb{Q}(i)$, with first Fourier-Jacobi coefficient not identically zero. Building on a paper by Das and Jha, we prove the meromorphic continuation to $\mathbb{C}$ and the functional equation of a degree six $L$-function attached to $F$ by Gritsenko, twisted by a Dirichlet character.
\end{abstract}
\section{Introduction}

Andrianov initiated the study of the spinor $L$-function attached to a cuspidal Siegel eigenform of degree two. In \cite{andrianov2}, he proved its analytic continuation to the complex plane and its functional equation.\\

A major simplification of the proof of the analytic properties of the spinor $L$-function was given by Kohnen and Skoruppa, in their paper \cite{kohnen_skoruppa}. In particular, the authors considered the Dirichlet series
\begin{equation}\label{dirichlet series}
    D_{F,G}(s) := \zeta(2s-2k+4)\sum_{N=1}^{\infty} \langle \phi_N, \psi_N \rangle N^{-s},
\end{equation}
where $\phi_N, \psi_N$ are the $N$th Fourier-Jacobi coefficients of two degree two Siegel cusp forms $F,G$ of integral weight $k\geq0$ and $\langle \;, \;\rangle$ denotes the Petersson inner product on the space of Jacobi forms of index $N$ and weight $k$.\\

They prove two main results: Firstly, this series admits a meromorphic continuation to $\mathbb{C}$ and satisfies a functional equation. Secondly, when $F$ is a Hecke eigenform and $G$ is in the Maass space, this series is proportional to the spinor $L$-function attached to $F$. The analytic properties of the spinor $L$-function then follow, provided the first Fourier-Jacobi coefficient of $F$ is not identically zero. It turns out that this is not a restriction, as Manickam has shown in \cite{manickam}.\\

Gritsenko, in his paper \cite{gritsenko_zeta}, considered a certain degree six $L$-function attached to a cuspidal Hermitian eigenform of degree two over $\mathbb{Q}(i)$. By using methods similar to Andrianov, he managed to show its meromorphic continuation to $\mathbb{C}$ as well as to establish a functional equation. Moreover, in his later paper \cite{gritsenko}, he considered a same-type Dirichlet series as \eqref{dirichlet series} and proved completely analogous results to the ones of Kohnen and Skoruppa for the Hermitian case, with the same restriction on the first Fourier-Jacobi coefficient. \\

It should be noted here that the analytic properties of the Hermitian analogue of \eqref{dirichlet series} were also proved by Raghavan and Sengupta in \cite{raghavan}. The authors, however, did not obtain the relation of the Dirichlet series with Gritsenko's $L$-function in the general case, but only when both $F,G$ belong in the Maass space.\\

In another direction, twists of \eqref{dirichlet series} by Dirichlet characters were considered, initially by Kohnen in \cite{kohnen} for the degree two case (along with some restrictions on the character) and later by Kohnen, Sengupta, and Krieg in \cite{kohnen_krieg_sengupta} for the arbitrary degree case. The authors proved the analytic properties of these twisted Dirichlet series and, through these, obtained the analytic properties of the twisted spinor $L$-function of Andrianov.\\

Recently, Das and Jha in \cite{das_jha}, proved the analytic properties of a twisted Dirichlet series, analogous to \eqref{dirichlet series}, attached to two cuspidal Hermitian eigenforms of degree two over $\mathbb{Q}(i)$ (here the Dirichlet character is defined on $\mathbb{Z}[i]$).\\

In this note, we combine the results of Das and Jha in \cite{das_jha} with the methods of Gritsenko in \cite{gritsenko}, in order to obtain the analytic properties of the twisted degree six $L$-function attached to a Hermitian cuspidal eigenform of degree two over $\mathbb{Q}(i)$, with first Fourier-Jacobi coefficient not identically zero. Our main contribution is finding the correct zeta and gamma factors in order to define a \textbf{completed} twisted $L$-function and appropriately generalise \cite[Theorem 4.1, 5.1]{gritsenko}.\\

The structure of the paper is as follows: In Section $2$, we give the main definitions for Hermitian modular forms and their Fourier-Jacobi expansions. In Section $3$, we present the relevant Hecke theory we need. Finally, in Section $4$, we give the main Proposition \ref{theorem1}, from which the main Theorem \ref{theorem2} follows, proving the analytic properties of the twisted degree six $L$-function considered.
\begin{notation}
    We denote the space of $n\times n$ matrices with coefficients in a ring $R$ with $\textup{M}_{n}(R)$. By $1_n$, we denote the identity matrix. For any matrix $M \in \textup{Mat}_{n}(R)$, we denote by $\det(M), \textup{ tr}(M)$ the determinant and trace of $M$ respectively. By $\textup{GL}_n(R)$, we denote the matrices in $M_{n}(R)$ with non-zero determinant. For any vector $v$, we denote by $v^{t}$ its transpose. For a complex number $z$ let $\overline{z}$ denote its complex conjugate and $e(z) := e^{2\pi i z}$. We also use the bracket notation $A[B] := \overline{B}^{t}AB$ for suitably sized complex matrices $A,B$. Finally, we write $A \geq B$ for two matrices $A,B$ if $A-B$ is semi-positive definite.
\end{notation}
\section{Preliminaries}
Let $K := \mathbb{Q}(i)$ denote the Gaussian field and $\mathcal{O}_K = \mathbb{Z}[i]$ denote its ring of integers.
\begin{defn}\label{unitary}
Let $R$ be either $K$, $\mathcal{O}_K$ or $\mathbb{C}$ and fix an embedding $R \hookrightarrow \mathbb{C}$. We write $U(2,2)(R)$ for the $R$-points of the \textbf{unitary group} of degree two. That is,
\begin{equation*}
    U(2,2)(R) := \left\{g \in \textup{GL}_{4}(R) \mid J_2[g] = J_2\right\},
\end{equation*}
where $J_2 := \m{0_2 & -1_2\\1_2 &0_2}$.
\end{defn}
Hence, for an element $\m{A&B\\C&D} \in U(2,2)(R)$ with $2\times 2$ matrices $A,B,C,D$, these satisfy the relations
\begin{equation*}
    \overline{A}^{t}C = \overline{C}^{t}A, \textup{ }\overline{D}^{t}B = \overline{B}^{t}D, \textup{ }A\overline{D}^{t} - \overline{B}^{t}C = 1_2.
\end{equation*}

\begin{defn}
The \textbf{Hermitian upper half-plane} of degree two is defined by 
\begin{equation*}
    \mathbb{H}_2 := \{Z \in \textup{M}_2(\mathbb{C})|-i (Z-\overline{Z}^{t})>0\}.
\end{equation*}
We also denote the usual upper half plane by $\mathbb{H}$.
\end{defn}
We fix an embedding $K \hookrightarrow \mathbb{C}$. Then, an element $g = \m{A&B\\C&D} \in U(2,2)(K) \hookrightarrow U(2,2)(\mathbb{C})$ of the unitary group acts on the above upper half plane via the action
\begin{equation*}
    Z \longmapsto g\langle Z\rangle := (AZ+B)(CZ+D)^{-1}.
\end{equation*}

The usual \textbf{factor of automorphy} is defined by $j(g, Z) := \textup{det}(CZ+D)$.\\

Let now $\Gamma_2$ denote the \textbf{Hermitian modular group}, that is $\Gamma_2 := U(2,2)(\mathcal{O}_K)$. We define the following slash operator.
\begin{defn}
    Let $k\in\mathbb{Z}$. Then, for a function $F$ on $\mathbb{H}_{2}$ and a matrix $g = \m{A&B\\C&D} \in U(2,2)(K)$, we define
    \begin{equation*}
        (F\mid_k g)(Z) := \textup{det}(CZ+D)^{-k}F(g\langle Z\rangle).
    \end{equation*}
\end{defn}
\begin{defn}\label{hermitian_modular_form}
A function $F : \mathbb{H}_2 \longrightarrow \mathbb{C}$ is called a \textbf{Hermitian modular form} of degree two and integer weight $k \geq 0$ if
\begin{itemize}
    \item $F$ is holomorphic,
    \item $F$ satisfies 
    \begin{equation*}
        (F|_{k}g)(Z) = F(Z),
    \end{equation*}
for all $g \in \Gamma_2$ and $Z \in \mathbb{H}_2$. 
\end{itemize}
\end{defn}
It is well-known (\cite[Chapter III.2]{Krieg_Book}) that the set of all such forms constitutes a finite-dimensional complex vector space, which we denote by $M_{2}^{k}$. Each such $F$ admits a Fourier expansion
\begin{equation}\label{fourier-expansion}
    F(Z) = \sum_{N \in \Lambda_2, N \geq 0}a(N)e(\textup{tr}(NZ)),
\end{equation}
where $\Lambda_2 := \left\{N = (n_{ij})_{i,j=1}^{2} \mid \overline{N}^{t} = N,  n_{ii} \in \mathbb{Z}, 2n_{ij} \in \mathcal{O}_K \textup{ for } i\neq j\right\}$.\\

$F$ is called a \textbf{cusp form} if $a(N) \neq 0$ only for $N$ positive definite. We denote the space of cusp forms by $S_{2}^{k}$. Finally, we have a notion of a Petersson inner product.
\begin{defn}\label{hermitian_inner_product}
The \textbf{Petersson inner product} for two Hermitian modular forms $F,G$, where at least one of them is a cusp form, is given by
\begin{equation*}
    \langle F, G\rangle := \int_{\Gamma_2\backslash \mathbb{H}_2}{F(Z)\overline{G(Z)}(\det Y)^{k}\textup{d}^{*}Z},
\end{equation*}
where $\textup{d}^{*}Z = (\det Y)^{-2n}\textup{d}X\textup{d}Y$ with $Z=X+iY$.
\end{defn}

Let now $F \in S_2^k$ and $Z \in \mathbb{H}_2$. If we partition $Z = \m{\tau&z_1 \\ z_2 &\omega}$, with $\tau, \omega \in \mathbb{H}$ and $z_1,z_2 \in \mathbb{C}$, we can write the expansion of $F$ with respect to the variable $\omega$ as
\begin{equation}\label{fourier-jacobi expansion}
    F(Z) = \sum_{m=1}^{\infty} \phi_{m}(\tau, z_1, z_2)e(m\omega),
\end{equation}
where $\phi_m : \mathbb{H}\times \mathbb{C}^2 \longrightarrow \mathbb{C}$ are holomorphic functions, called the \textbf{Fourier-Jacobi coefficients} of $F$. These are a special kind of modular forms, with respect to the action of the so-called \textbf{parabolic subgroups}. Let us make this more precise. Let
\begin{equation}\label{parabolic}
    \Gamma_{1,1} := \left\{\m{*&0&*&*\\ *&*&*&*\\ *&0&*&*\\0&0&0&*} \in \Gamma_{2}\right\},
\end{equation}
the Klingen parabolic of $\Gamma_2$. We then have the following Definition.
\begin{defn}\label{parabolic_modular_forms}
A holomorphic function $F$ on $\mathbb{H}_{2}$ is a modular form of weight $k$ with respect to the parabolic subgroup $\Gamma_{1,1}$ if the following conditions hold:
\begin{itemize}
    \item $(F \mid_k M)(Z) = F(Z)$ for all $M \in \Gamma_{1,1}, Z \in \mathbb{H}_2$,
    \item The function $F(Z)$ is bounded in the domain $\textup{Im}(Z)\geq c1_2$, for all $c>0$.
\end{itemize}
\end{defn}
Again, each such $F$ has a Fourier expansion as in equation \eqref{fourier-expansion}, and we call $F$ a \textbf{cusp form} if $a(N) \neq 0$ only for positive definite matrices $N$. Finally, we can define the (Hermitian) Fourier-Jacobi forms as Gritsenko does in \cite[p. 2887]{gritsenko}.
\begin{defn}\label{jacobi}
    A complex-valued, holomorphic function $\phi$ on $\mathbb{H} \times \mathbb{C}^{2} $ is said to be a Fourier-Jacobi form of degree two, weight $k$ and index $m$ if the function 
    \begin{equation*}
    \widetilde{\phi}\left(\m{\tau&z_1\\z_2&\omega}\right):= \phi(\tau, z_1, z_2)e(m\omega)
    \end{equation*}
    is a modular form with respect to the group $\Gamma_{1,1}$, where $\omega \in \mathbb{H}$ is chosen so that $\m{\tau&z_1\\z_2&\omega} \in \mathbb{H}_{2}$. The space of such forms is denoted by $J_{k,m}$ and the corresponding space of cusp forms by $J_{k,m}^{\textup{cusp}}$.
\end{defn}
In particular, the Fourier-Jacobi coefficients of $F$ in \eqref{fourier-jacobi expansion} are elements of $J^{\textup{cusp}}_{k,m}$. Finally, we define the following inner product on $J^{\textup{cusp}}_{k,m}$.
\begin{defn}\label{inner_product_jacobi}
The Petersson inner product of two Fourier-Jacobi forms $\phi, \psi \in J^{\textup{cusp}}_{k,m}$ is defined as
\begin{equation*}
    \langle \phi, \psi \rangle := \int_{\mathcal{F}^{J}}\phi(\tau, z_1,z_2)\overline{\psi(\tau,z_1,z_2)}v^{k}e^{-\pi m |z_1-\overline{z_2}|^2/v}\textup{d}\mu,
\end{equation*}
where $\textup{d}\mu = v^{-4}\textup{d}u\textup{d}v\textup{d}x_1\textup{d}y_1\textup{d}x_2\textup{d}y_2$ with $\tau = u+iv$, $z_j=x_j+iy_j$ for $j=1,2$, and $\mathcal{F}^{J}$ is a fundamental domain for the action of $\Gamma_{1,1}$ on $\mathbb{H} \times \mathbb{C}^{2}$. 
\end{defn}
\section{Hecke Rings and the degree 6 $L$-function}
In this Section, we give an account of the Hermitian Hecke theory we will need. We follow Gritsenko in \cite{gritsenko}. We start by defining the groups of similitude:
\begin{equation*}
    S^{2} := \{g \in M_{4}(K) \mid J_2[g] = \mu(g)J_2, \textup{ for some } \mu(g) > 0\},
\end{equation*}
\begin{equation*}
    S^{2}_{p} := \{g \in S^{2} \cap M_{4}(\mathcal{O}_K[p^{-1}]) \mid \mu(g) = p^{\delta}, \delta \in \mathbb{Z}\},
\end{equation*}
for each rational prime $p$. It is then well-known that the pairs $(\Gamma_2, S^{2}), (\Gamma_{2}, S^{2}_p)$ are Hecke pairs and we can define the corresponding Hecke rings, which we will denote by $H^2$ and $H_{p}^{2}$ respectively (see \cite[pp. 2869, 2870]{gritsenko3} for definitions of Hecke pairs and Hecke rings). From \cite[Corollary $2.2$]{gritsenko}, we can decompose the global Hecke ring into the tensor product of $p$-rings as follows:
\begin{equation*}
    H(\Gamma_2, S^{2}) = \bigotimes_{p}H(\Gamma_2, S^{2}_p).
\end{equation*}
We now want to define a Hecke ring which acts on the Fourier-Jacobi forms. To this end, we define the so-called parabolic Hecke rings. Let $S^{1,1}$ and $S_{p}^{1,1}$ denote the intersection of the groups $S^{2}$ and $S_{p}^{2}$ with $\Gamma_{1,1}$ of equation \eqref{parabolic} respectively. Again, the pairs $(\Gamma_{1,1}, S^{1,1})$ and $(\Gamma_{1,1}, S_{p}^{1,1})$ are Hecke pairs (cf. \cite[Section 3]{gritsenko}) and we can then define the Hecke rings
\begin{equation*}
    H^{1,1} := H(\Gamma_{1,1}, S^{1,1}), \textup{ }H_{p}^{1,1} := H(\Gamma_{1,1}, S_{p}^{1,1}).
\end{equation*}
We can now view the original Hecke rings $H^2$ and $H_p^2$ as subrings of the corresponding parabolic Hecke rings, by virtue of the following Lemma.
\begin{lemma}\label{embeddings}
Let $(\Gamma_0, S_0)$ and $(\Gamma, S)$ be two Hecke pairs. We assume that
\begin{equation*}
    \Gamma_0 \subset \Gamma, \,\,\,\Gamma S_0 = S,\, \,\,\Gamma \cap S_0S_0^{-1} \subset \Gamma_0.
\end{equation*}
Then, given an arbitrary element $X \in H(\Gamma, S)$, according to the second condition, we can write it as
\begin{equation*}
    X = \sum_{i} a_i (\Gamma g_i),
\end{equation*}
with $g_i \in S_0$. Then, if we set
\begin{equation*}
    \epsilon(X) := \sum_{i} a_i (\Gamma_{0}g_i),
\end{equation*}
then $\epsilon$ does not depend on the selection of the elements $g_i \in S_0$ and is an embedding (as a ring homomorphism) of the Hecke algebra $H(\Gamma, S)$ to $H(\Gamma_0, S_0)$.
\end{lemma}
\begin{proof}
See \cite[page 2890]{gritsenko}.
\end{proof}
Now, for each prime $p$, since $\Gamma_{2}S_{p}^{1,1} = S_{p}^{2}$, after writing an element $X \in H_{p}^{2}$ as
\begin{equation*}
    X = \sum_{i} a_i \Gamma_{2} g_i,
\end{equation*}
with $g_i \in S_{p}^{2}$, we can define an embedding $H_p^2 \xhookrightarrow{} H_p^{1,1}$ given by
\begin{equation*}
    X \longmapsto \epsilon(X) = \sum_{i} a_i\Gamma_{1,1}g_i,
\end{equation*}
using Lemma \ref{embeddings}.\\

Moreover, we can embed the classical Hecke algebra $H(\textup{SL}_2(\mathbb{Z}),\textup{GL}_2(\mathbb{Q})) \xhookrightarrow{} H(\Gamma_{1,1}, S^{1,1})$ in two ways, as follows: If $X = \textup{SL}_2(\mathbb{Z})g\textup{SL}_2(\mathbb{Z})$ with $g = \textup{diag}(a, d)$, where $a,d \in \mathbb{Z}$, we define
\begin{equation}\label{+- embeddings}
    j_{-}(X) := \Gamma_{1,1}\textup{diag}(a,\det(g), d, 1)\Gamma_{1,1}, \textup{ }j_{+}(X) := \Gamma_{1,1}\textup{diag}(a,1, d, \det(g))\Gamma_{1,1}.
\end{equation}
These are related by $(j_{-}(X))^{*} = j_{+}(X)$ for all $X \in H(\textup{SL}_2(\mathbb{Z}),\textup{GL}_2(\mathbb{Q}))$, where $* : H^{1,1} \longrightarrow H^{1,1}$ is the anti-homomorphism defined by
\begin{equation}\label{antihomomorphism}
    \sum_{i}a_i\Gamma_{1,1}M_i\Gamma_{1,1} \longmapsto \sum_{i}a_i\Gamma_{1,1}\mu(M_i)M_i^{-1}\Gamma_{1,1},
\end{equation}
with $\mu(M)$ the similitude of $M$. \\

Let us now define how the elements of the parabolic Hecke algebras act on Fourier-Jacobi forms.
Let  
\begin{equation*}
 X = \Gamma_{1,1}\m{*&0&*&*\\**&a&*&*\\**&0&*&*\\0&0&0&b}\Gamma_{1,1} = \sum_{i} \Gamma_{1,1}g_i \in H^{1,1},
\end{equation*}
for some $g_i \in S^{1,1}$. Now, if $F$ is as in Definition \ref{parabolic_modular_forms}, we define
\begin{equation}\label{action on modular forms}
    (F\mid_{k} X)(Z) := \sum_{i}(F\mid_{k} g_i)(Z),
\end{equation}
where for any $g \in S^{1,1}$, we define the slash operator in this case by
\begin{equation*}
    (F \mid_k g)(Z) := \mu(g)^{2k-4}j(g,Z)^{-k}F(g\langle Z\rangle).
\end{equation*}
Gritsenko gave the following very convenient definition of the signature.
\begin{defn}\label{signature}
    The signature of $X$ is defined as $s(X) := b/a$.
\end{defn}
Using the signature $s:=s(X)$ of $X$, we can now define its action on Fourier-Jacobi forms.
\begin{proposition}\label{fourier_jacobi_action}
    Let $\phi \in J_{k,m}$ denote a Fourier-Jacobi form of weight $k$ and index $m$. Then, for $Z = \m{\tau&z_1\\z_2&\omega} \in \mathbb{H}_2$, we define the action of $X$ on $\phi$ via
    \begin{equation*}
        \left(\phi \mid_k X\right)(\tau, z_1, z_2) := \left(\widetilde{\phi} \mid_k X \right)(Z)e\left(-\frac{m}{s}\omega\right),
    \end{equation*}
with $\widetilde{\phi}(Z):= \phi(\tau, z_1, z_2)e(m\omega)$.
Then $\phi \mid_k X$ belongs to $J_{k,m/s}$ if $m/s$ is an integer and is $0$ otherwise.
\end{proposition}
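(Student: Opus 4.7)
The plan is to reduce the entire statement to an explicit computation of how $g \in S^{1,1}$ acts on the coordinates $(\tau,z_1,z_2,\omega)$ of $Z \in \mathbb{H}_2$. The first step I would take is the key algebraic observation: writing $g = \m{A & B \\ C & D}$ in Klingen block form with $A = \m{a_{11} & 0 \\ a_{21} & a}$ and $C = \m{c_{11} & 0 \\ c_{21} & 0}$, the unitary similitude relation $\overline{A}^{t}C = \overline{C}^{t}A$ (a consequence of $J_2[g] = \mu(g)J_2$) makes the $(1,2)$-entries read $0 = \overline{c_{21}}\, a$, whence $c_{21} = 0$ since $a \neq 0$. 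This single simplification is what makes the whole proposition work.

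With $c_{21}=0$, the matrix $CZ+D$ is upper triangular with diagonal $(c_{11}\tau + d_{11},\, b)$, so $\det(CZ+D) = b(c_{11}\tau+d_{11})$ carries no $\omega$-dependence. A direct expansion of $g\langle Z\rangle = \m{\tau' & z_1' \\ z_2' & \omega'}$ then shows that $\tau', z_1', z_2'$ are functions of $(\tau,z_1,z_2)$ alone, while $\omega' = \omega/s + f(\tau,z_1,z_2)$ with $s = b/a$ and $f$ an explicit rational function. Substituting $\widetilde{\phi}(Z) = \phi(\tau,z_1,z_2)e(m\omega)$ into the slash operator gives $(\widetilde{\phi}\mid_k g)(Z) = H(\tau,z_1,z_2)\cdot e(m\omega/s)$ for some $H$, so multiplication by $e(-m\omega/s)$ kills the $\omega$-dependence entirely. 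To extend to $X = \sum_i \Gamma_{1,1} g_i$, I would check that $s$ is constant on the double coset: the similitude identity forces $\overline{a_{22}}\, d_{22} = 1$ for any $\gamma \in \Gamma_{1,1}$ with $a_{22}, d_{22} \in \{\pm 1,\pm i\}$, and in each of the four cases one reads off $d_{22}/a_{22} = 1$. Thus every $\gamma \in \Gamma_{1,1}$ has signature $1$ and the definition of $\phi\mid_k X$ is representative-independent.

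To finish, set $m' := m/s$ and observe that the natural lift $\widetilde{\phi\mid_k X}(Z) := (\phi\mid_k X)(\tau,z_1,z_2)\,e(m'\omega)$ coincides with $\widetilde{\phi}\mid_k X$ by construction. Since $\widetilde{\phi}$ is $\Gamma_{1,1}$-modular and the Hecke action of $H^{1,1}$ preserves $\Gamma_{1,1}$-modularity, holomorphy, and the boundedness condition on $\textup{Im}(Z)\geq c\,1_2$, the function $\phi\mid_k X$ belongs to $J_{k,m'}$ whenever $m' \in \mathbb{Z}$. In the remaining case $m' \notin \mathbb{Z}$, I would invoke the translation $\m{1_2 & \textup{diag}(0,n) \\ 0_2 & 1_2} \in \Gamma_{1,1}$ for $n \in \mathbb{Z}$, which induces $\omega \mapsto \omega + n$; $\Gamma_{1,1}$-modularity of $\widetilde{\phi\mid_k X}$ then gives $e(m'n)\cdot(\phi\mid_k X) = \phi\mid_k X$ for every integer $n$, forcing $\phi\mid_k X \equiv 0$. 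The main obstacle is the unwinding of $(CZ+D)^{-1}$ to extract the clean formula $\omega' = \omega/s + f$: without $c_{21}=0$ the coefficient of $\omega$ in $\omega'$ would be a non-constant rational function of $(\tau,z_1)$ and the whole definition of $\phi\mid_k X$ would collapse.
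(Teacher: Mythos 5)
Your argument is correct and complete: the paper offers no proof of its own here, simply citing Gritsenko's Lemma 4.1, and your direct computation (the $\omega$-coefficient of $\omega'$ equalling $a_{22}/d_{22}=1/s$, constancy of the signature on the double coset, and the translation $\omega\mapsto\omega+n$ forcing vanishing when $m/s\notin\mathbb{Z}$) is exactly the standard verification that citation stands in for. One minor redundancy: the entry you call $c_{21}$ is the $(4,1)$ entry of $g$, which already vanishes by the defining shape \eqref{parabolic} of $\Gamma_{1,1}$ (and hence of $S^{1,1}$), so the appeal to $\overline{A}^{t}C=\overline{C}^{t}A$ is not needed.
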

\begin{proof}
    See \cite[Lemma $4.1$]{gritsenko}.
\end{proof}
Finally, we define the degree $6$ $L$-function associated to a Hermitian modular form of degree two. For each prime $p$, we define the following standard elements of $H(\Gamma_2, S_p^2)$:
\begin{itemize}
    \item $T_p := \Gamma_2\textup{diag}(1,1,p,p)\Gamma_2$.
    \item $T_{1,p} := \Gamma_2\textup{diag}(1,p,p^2,p)\Gamma_2$.
    \item $T_{\pi} := \Gamma_2\textup{diag}(1, \pi, p, \pi)\Gamma_2, \textup{ }T_{\overline{\pi}} := \Gamma_2(1, \overline{\pi}, p, \overline{\pi})\Gamma_2$ (if $p = \pi\overline{\pi}$, including the case $p=2$).
    \item $\Delta_q = \Gamma_2\textup{diag}(q,q,q,q)\Gamma_2$, for $q \in \{p, \pi, \overline{\pi}\}$, depending on the prime $p$.
\end{itemize}
Let now $F \in S_2^k$ be a Hecke eigenform for $H^2$, i.e. an eigenfunction for all Hecke elements in $H^2$. We use the following notation: For any polynomial $U[X] \in H^{2}[X]$, we write $U_{F}$ for the polynomial obtained when we substitute the elements of the Hecke ring with their corresponding eigenvalues. We then have the following Definition.
\begin{defn}(\cite[Lemma 2.1]{gritsenko_zeta})
    The degree $6$ $L$-function attached to $F$ is defined as:
    \begin{equation*}
        Z_{F}^{(2)}(s) := \prod_{p \textup{ prime in }\mathcal{O}_K}(1+p^{k-s-2})^{-2}Q_{p,F}^{(2)}(p^{-s})^{-1}\prod_{\substack{p \textup{ splits}\\ \textup{or }p=2}}Q_{p,F}^{(2)}(p^{-s})^{-1},
    \end{equation*}
    where the polynomials $Q_{p}^{(2)}(t) \in H^{2}[t]$ are defined as follows:
    \begin{itemize}
        \item $p$ inert in $\mathcal{O}_K$:
        \begin{equation*}
            Q_p^{(2)}(t) := 1-T_pt + (pT_{1,p}+p(p^3+p^2-p+1)\Delta_p)t^2-p^4\Delta_pT_pt^3+p^8\Delta_p^2t^4.
        \end{equation*}
        \item $p = 2$:
        \begin{multline*}
            Q_2^{(2)}(t) := 1 - (T_2-3\Delta_{i+1})t+(2T_{1+i}^2-8\Delta_{1+i}(T_2 + \Delta_{1+i}))t^2-\\-(4\Delta_{1+i})^2(T_2-3\Delta_{1+i})t^3+(4\Delta_{1+i})^4t^4.
        \end{multline*}
        \item $p = \pi \overline{\pi}$, with $(\pi)\neq (\overline{\pi})$ splits in $\mathcal{O}_K$:
        \begin{multline*}
            Q_p^{(2)}(t) := 1 - T_pt+(pT_{\pi}T_{\overline{\pi}}-p^4\Delta_p)t^2-(p^3(T_{\pi}^2\Delta_{\overline{\pi}}+T_{\overline{\pi}}^2\Delta_{\pi})-2p^4\Delta_pT_p)t^3+\\+p^4\Delta_p(pT_{\pi}T_{\overline{\pi}}-p^4\Delta_p)t^4-p^8\Delta_p^2T_pt^5+p^{12}\Delta_p^3t^6.
        \end{multline*}
    \end{itemize}
    This is well-defined for $\textup{Re}(s)$ large enough (cf. \cite[p. 2546]{gritsenko_zeta}).
\end{defn}
We now define the \textbf{twisted degree $6$ $L$-function} attached to $F$ as follows. Let $N \geq 1$ be an integer. Once and for all, we fix a \textbf{Dirichlet character} $\chi: (\mathcal{O}_K/N\mathcal{O}_K)^{\times} \xrightarrow{} \mathbb{C}^{\times}$.
\begin{defn}
Let $F \in S_2^{k}$ be an eigenform for $H^2$. For $\textup{Re}(s)$ large enough, we define
\begin{equation*}
    Z_{F}^{(2)}(s, \chi) := \prod_{\textup{\textit{p} prime in }\mathcal{O}_K}(1+\chi(p)p^{k-2-s})^{-2}Q_{p, F}^{(2)}(\chi(p)p^{-s})^{-1}\prod_{\substack{p \textup{ splits}\\ \textup{or }p=2 }}Q_{p,F}^{(2)}(\chi(p)p^{-s})^{-1}.
\end{equation*}
\end{defn}
\section{Main Theorem}
In this Section, we prove the meromorphic continuation to $\mathbb{C}$ and functional equation for the twisted degree six $L$-function. We first make some definitions.\\

Let $\theta : (\mathbb{Z}/4\mathbb{Z})^{\times} \longrightarrow \mathbb{C}^{\times}$ denote the character with $\theta(3)=-1$ (equivalently the non-trivial quadratic character attached to the extension $K/\mathbb{Q}$).\\

For any Dirichlet character $\psi : (\mathbb{Z}/m\mathbb{Z})^{\times} \longrightarrow \mathbb{C}^{\times}$, where $m \geq 1$ an integer, we define
\begin{equation*}
    L(s, \psi) := \sum_{n=1}^{\infty}\psi(n)n^{-s} = \prod_{p \textup{ prime}}(1-\psi(p)p^{-s})^{-1}, \textup{ Re}(s)>1.
\end{equation*}

For a Dirichlet character $\omega: (\mathcal{O}_K/M\mathcal{O}_K)^{\times} \longrightarrow \mathbb{C}^{\times}$, where $M \geq 1$ an integer, we define 
\begin{equation*}
    \zeta_K(s, \omega):= \frac{1}{4}\sum_{0 \neq \lambda \in \mathcal{O}_K}\omega(\lambda)N(\lambda)^{-s}, \textup{ Re}(s)>1,
\end{equation*}
where $N(\lambda):=\lambda\overline{\lambda}$ denotes the \textbf{norm} of $\lambda$. We note that we also consider $L(s, \omega)$ by implicitly considering the restriction of $\omega$ to $\mathbb{Z}$.\\

For any $m \geq 1$, let $T(m)$ denote the standard Hecke element of $\textup{SL}_2(\mathbb{Z})$. We then define $T_{\pm}(m) := j_{\pm}(T(m))$, where $j_{\pm}$ are the embeddings of equation $\eqref{+- embeddings}$. Let also $\widetilde{\chi}$ denote the character given by $\widetilde{\chi}(\alpha) := \chi(\overline{\alpha})$. We then have the following Proposition.
\begin{proposition}\label{theorem1}
Let $F$ be a Hermitian cusp form of degree 2 and weight $k$ for the group $\Gamma_2$, which is an eigenfunction for all Hecke operators of the ring $H(\Gamma_2, S^2)$. Assume $\{\phi_m\}_{m=1}^{\infty}$ are its Fourier-Jacobi coefficients. Then, for $\textup{Re}(s)$ large enough, we have
\begin{multline*}
    L^2(s-k+2, \theta\chi)L(2s-2k+4, \chi^2)\zeta_{K}( s-k+3, \chi\widetilde{\chi})\sum_{m =1}^{\infty}\phi_m|_{k}T_{+}(m)\chi(m)m^{-s} =\\ =\zeta_{K}(s-k+2, \chi\widetilde{\chi})\phi_1Z_{F}^{(2)}(s,\chi).
\end{multline*}
If $N=1$ and $\chi$ is principal, this coincides with the result of Gritsenko in \cite[Theorem 4.1]{gritsenko}.
\end{proposition}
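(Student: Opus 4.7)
The plan is to replay Gritsenko's proof of \cite[Theorem 4.1]{gritsenko} with the Dirichlet character $\chi$ inserted throughout. Both sides of the proposition are a formal multiple of $\phi_1$ by a scalar Dirichlet series, and the identity reduces to a matching family of local identities in the parabolic Hecke rings $H^{1,1}_p$, one for each rational prime $p$, assembled via the Euler product.

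The mechanism relies on two facts. First, since $F$ is an eigenform for $H^2$ with Hecke eigenvalues $\lambda_F(X)$, and the embedding $\epsilon : H^2 \hookrightarrow H^{1,1}$ is compatible with the slash action (both being defined through the same coset representatives), comparing the first Fourier-Jacobi coefficients of $F|_k \epsilon(X) = \lambda_F(X) F$ together with the signature decomposition of Proposition \ref{fourier_jacobi_action} yields the key formula
\begin{equation*}
\lambda_F(X)\,\phi_1 \;=\; \sum_{m \geq 1} \phi_m \,|_k\, \epsilon(X)_m,
\end{equation*}
where $\epsilon(X)_m$ denotes the signature-$m$ component of $\epsilon(X)$. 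Second, by \cite[Corollary 2.2]{gritsenko} the global Hecke ring decomposes as $H(\Gamma_2, S^2) = \bigotimes_p H(\Gamma_2, S_p^2)$, and the operators $T_+(m) = j_+(T(m))$ are multiplicative in $m$, so both sides of the proposition admit formal Euler product expansions and it is enough to match local factors at each rational prime $p$.

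For each such $p$, the goal is then to establish an identity in $H^{1,1}_p[[t]]$ of the form
\begin{equation*}
\epsilon\bigl(Q_p^{(2)}(t)\bigr) \cdot \sum_{n \geq 0} T_+(p^n)\,t^n \;=\; A_p(t),
\end{equation*}
where $A_p(t)$ is a scalar formal power series collecting the local factors of the $L$- and zeta-functions appearing in the statement and, at split primes, carrying an additional $\epsilon(Q_p^{(2)}(t))$ factor so that, once $F$ is invoked as an eigenform, the eigenvalue $Q_{p,F}^{(2)}(t)$ appears twice, reproducing the second product in the definition of $Z_F^{(2)}$. The three cases are: $p$ inert (essentially Gritsenko's identity with $t$ replaced by $\chi(p)t$), $p = 2$ ramified (using the special shape of $Q_2^{(2)}$ in terms of $T_{1+i}$ and $\Delta_{1+i}$), and $p = \pi\overline{\pi}$ split (which requires the most care). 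Specialising $t = \chi(p) p^{-s}$, applying the formula from the previous paragraph, and using that $\epsilon(Q_p^{(2)}(t))$ acts on $\phi_1$ by the scalar $Q_{p,F}^{(2)}(t)$, then multiplying the local identities over all $p$, reduces the proposition to the scalar identity of local Euler factors.

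The hardest step will be the split-prime case. Because $\chi$ is defined on $(\mathcal{O}_K/N\mathcal{O}_K)^\times$ rather than on $(\mathbb{Z}/N\mathbb{Z})^\times$, the values $\chi(\pi)$ and $\chi(\overline{\pi})$ are independent, and the Galois-symmetric local computation of the untwisted case must be unfolded to track which terms carry $\chi(\pi)$ versus $\chi(\overline{\pi})$. This bookkeeping is precisely what produces the extra $\prod_{p \text{ splits}} Q_{p,F}^{(2)}(\chi(p)p^{-s})^{-1}$ on the right-hand side, and it also explains why $\zeta_K(\,\cdot\,, \chi\widetilde{\chi})$, which involves both $\chi$ and its conjugate character, rather than $L(\,\cdot\,, \chi)^2$, is the correct zeta factor on the left. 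The degeneration to $N = 1$ with $\chi$ principal must recover Gritsenko's \cite[Theorem 4.1]{gritsenko}, which serves as a sanity check on the final placement of $\chi$ in the $L$- and zeta-factors.
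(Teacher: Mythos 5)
Your overall skeleton (Euler product over rational primes plus a local identity at each $p$) is the right one, but two specific claims in your plan are wrong and would derail the execution. First, you have misread the definition of $Z_F^{(2)}(s,\chi)$: the two products there run over \emph{disjoint} sets of rational primes --- those that remain prime (inert) in $\mathcal{O}_K$ and those that split --- so a split prime contributes exactly \emph{one} factor $Q_{p,F}^{(2)}(\chi(p)p^{-s})^{-1}$ (of degree six in $p^{-s}$, as the split-case polynomial $Q_p^{(2)}$ already has degree six), not two. The local identity you propose at split primes, with $A_p(t)$ ``carrying an additional $\epsilon(Q_p^{(2)}(t))$ factor,'' is therefore not the identity you need and is in fact false as stated: if $A_p(t)$ were a scalar multiple of $\epsilon(Q_p^{(2)}(t))$, cancelling would force $\sum_n T_+(p^n)t^n$ to be a scalar series. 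Second, the difficulty you flag as the hardest step --- tracking $\chi(\pi)$ versus $\chi(\overline{\pi})$ at split primes --- does not exist. The Dirichlet series $\sum_m \phi_m|_k T_+(m)\chi(m)m^{-s}$ is indexed by rational integers $m$, so only the restriction of $\chi$ to $\mathbb{Z}$ ever enters; likewise $\chi\widetilde{\chi}(\lambda)=\chi(\lambda\overline{\lambda})=\chi(N(\lambda))$ depends only on $\chi$ at rational integers, which is exactly why $\zeta_K(\cdot,\chi\widetilde{\chi})$ is the correct factor. The local factor at a split $p$ is simply the untwisted one with $t=p^{-s}$ replaced by $\chi(p)p^{-s}$; there is no Galois-asymmetric unfolding to do.

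The paper's actual proof is much shorter than what you plan: it takes Gritsenko's Proposition 4.2 as a black box, namely the identity in $J_{k,1}[[t]]$ expressing $Q_{p,F}^{(2)}(t)\sum_{\delta\ge 0}(\phi_{mp^\delta}|_k T_+(mp^\delta))t^\delta$ as an explicit elementary polynomial in $t$ times $\phi_m|_kT_+(m)$ (with three cases: inert, split, $p=2$), combines this with the multiplicativity $T_+(mn)=T_+(m)T_+(n)$ for $\gcd(m,n)=1$ to get the Euler product of $\phi_1^{-1}\sum_m\phi_m|_kT_+(m)\chi(m)m^{-s}$ with local factor equal to the untwisted one evaluated at $t=\chi(p)p^{-s}$, and then matches these elementary polynomials against the Euler factors of $L(s-k+2,\theta\chi)^2$, $L(2s-2k+4,\chi^2)$ and the ratio $\zeta_K(s-k+3,\chi\widetilde{\chi})/\zeta_K(s-k+2,\chi\widetilde{\chi})$ prime by prime. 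You do not need to re-derive the Hecke-ring identities behind Proposition 4.2; you only need to observe that complete multiplicativity of $m\mapsto\chi(m)$ turns the substitution $t=p^{-s}$ into $t=\chi(p)p^{-s}$, and then do the (purely scalar) bookkeeping of Euler factors. If you insist on redoing the Hecke-ring computation, you must first correct the target identity at split primes as above.
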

\begin{proof}
We will use \cite[Proposition 4.2]{gritsenko}, which states that in the ring of formal power series $J_{k,1}[[t]]$, we have:
\begin{multline*}
    Q_{p,F}^{(2)}(t)\sum_{\delta \geq 0}(\phi_{mp^{\delta}}|_{k}T_{+}(mp^{\delta}))t^{\delta} = \\=\begin{cases}
      (1-p^{2k-6}t^2)\phi_m|_{k}T_{+}(m) & \text{$p$ prime in $\mathcal{O}_K$}\\
      (1-p^{k-3}t)^2(1-p^{2k-4}t^2)\phi_m|_{k}T_{+}(m) & \text{$p$ splits in $\mathcal{O}_K$}\\
      (1-2^{k-3}t)(1+2^{k-2}t)\phi_m|_{k}T_{+}(m) & \text{$p=2$}
    \end{cases},     
\end{multline*}
if $\gcd(m,p)=1$. Now, if $\gcd(m,p)=1$, by setting $t \longmapsto \chi(p)t$ and multiplying both sides with $\chi(m)$, we obtain:
\begin{multline}\label{expression gritsenko}
    Q_{p,F}^{(2)}(\chi(p)t)\sum_{\delta \geq 0}(\phi_{mp^{\delta}}|_{k}T_{+}(mp^{\delta}))\chi(mp^{\delta})t^{\delta} = \\=\begin{cases}
      (1-p^{2k-6}\chi^2(p)t^2)\phi_m|_{k}T_{+}(m)\chi(m) & \text{$p$ prime in $\mathcal{O}_K$}\\
      (1-p^{k-3}\chi(p)t)^2(1-p^{2k-4}\chi^2(p)t^2)\phi_m|_{k}T_{+}(m)\chi(m) & \text{$p$ splits in $\mathcal{O}_K$}\\
      (1-2^{k-3}\chi(p)t)(1+2^{k-2}\chi(p)t)\phi_m|_{k}T_{+}(m)\chi(m) & \text{$p=2$}
    \end{cases},     
\end{multline}
using also that $\chi$ is completely multiplicative.
By then applying this successively, we have that the expression
\begin{equation*}
    \sum_{m=1}^{\infty}\phi_m|_{k}T_{+}(m)\chi(m)m^{-s}
\end{equation*}
is equal to $\phi_1$ times an Euler product with $p$-factor
\begin{equation*}
    \begin{cases}
      (1-\chi^2(p)p^{-2s+2k-6})Q_{p,F}^{(2)}(\chi(p)p^{-s})^{-1} & \text{$p$ prime in $\mathcal{O}_K$}\\
      (1-\chi(p)p^{-s+k-3})^2(1-\chi^{2}(p)p^{-2s+2k-4})Q_{p,F}^{(2)}(\chi(p)p^{-s})^{-1} & \text{$p$ splits in $\mathcal{O}_K$}\\
      (1-\chi(2)2^{-s+k-3})(1+\chi(2)2^{-s+k-2})Q_{2,F}^{(2)}(\chi(2)2^{-s})^{-1}  & \text{$p=2$}
    \end{cases}.
\end{equation*}
Indeed, by fixing a single prime $p$ and assuming, for example, that it is inert, we can write
\begin{multline*}
    \sum_{m=1}^{\infty}\phi_m |_k T_{+}(m)\chi(m)m^{-s}=\sum_{\substack{m=1\\ (m,p)=1}}^{\infty}  \sum_{\delta \geq 0} \phi_{mp^\delta} |_k T_{+}(mp^{\delta}) \chi(mp^{\delta})(mp^{\delta})^{-s} = \\=(1-\chi^2(p)p^{-2s+2k-6})Q_{p,F}^{(2)}(\chi(p)p^{-s})^{-1}\sum_{\substack{m=1\\ (m,p)=1}}^{\infty}\phi_m |_k T_{+}(m)\chi(m)m^{-s},
\end{multline*}
using \eqref{expression gritsenko}. By doing this repeatedly for all primes, we obtain the Euler product above. The result of the Proposition now follows by comparing the local factors for each rational prime $p$, because of the following Euler product expressions:
\begin{equation*}
        L(s, \theta \chi)  = \prod_{p \textup{ prime in }\mathcal{O}_K}(1+\chi(p)p^{-s})^{-1}\prod_{2 \neq p \textup{ splits}}(1-\chi(p)p^{-s})^{-1}.
\end{equation*}
\begin{equation*}
    L(s,\chi^2) = \prod_{p}(1-\chi^2(p)p^{-s})^{-1}.
\end{equation*}
\begin{equation*}
    \zeta_K(s, \chi\widetilde{\chi}) = (1-\chi(2)2^{-s})^{-1}\prod_{p \textup{ prime in }\mathcal{O}_K}(1-\chi^2(p)p^{-2s})^{-1}\prod_{2 \neq p \textup{ splits}}(1-\chi(p)p^{-s})^{-2}.
\end{equation*}
\end{proof}
In order to now deduce the analytic properties of $Z_F(s, \chi)$, we consider a specific Dirichlet series, as Das and Jha did in \cite{das_jha}. For any $F, G \in S_2^{k}$, with Fourier-Jacobi expansions
\begin{equation*}
    F(Z) = \sum_{m=1}^{\infty}\phi_m(\tau, z_1, z_2)e(m\omega), \textup{ }G(Z) = \sum_{m=1}^{\infty}\psi_m(\tau, z_1, z_2)e(m\omega),
\end{equation*}
we define a \textbf{twisted Dirichlet series} attached to them via
\begin{equation}\label{twisted dirichlet}
    D_{F,G}(s,\chi):= \zeta(2s-2k+4)\zeta_K(s-k+3,\chi\widetilde{\chi})\sum_{m=1}^{\infty}\chi(m)\langle\phi_m, \psi_m\rangle m^{-s},
\end{equation}
where $\zeta(s)$ denotes the usual Riemann zeta function. This series converges absolutely and uniformly in compact subsets for $\textup{Re}(s)>k+1$ (cf. \cite[Section 4]{das_jha}) and also has good analytic properties (\cite[Theorem 4.1]{das_jha}).\\

We also define the \textbf{completion} of $Z_{F}(s, \chi)$ and the \textbf{completed twisted Dirichlet series} via
\begin{multline}\label{completed zeta}
    Z_{F}^{*}(s,\chi) := N^{-4s}(4\pi^3)^{-s}\Gamma(s)\Gamma(s-k+3)\Gamma(s-k+2)\times\\\times\frac{\zeta(2s-2k+4)\zeta_{K}(s-k+2, \chi\widetilde{\chi})}{L^2(s-k+2, \theta\chi)L(2s-2k+4, \chi^2)}Z_{F}^{(2)}(s,\chi)
\end{multline}
and
\begin{multline}\label{completed dirichlet}
    D_{F, G}^{*}(s,\chi) := N^{-4s}(4\pi^3)^{-s}\Gamma(s)\Gamma(s-k+3)\Gamma(s-k+2)\zeta(2s-2k+4)\times\\\times\zeta_{K}(s-k+3, \chi\widetilde{\chi})\sum_{m =1}^{\infty}\chi(m)\langle \phi_m, \psi_m \rangle m^{-s}.
\end{multline}
These are well-defined for $\textup{Re}(s)$ large enough. Here, $\displaystyle{\Gamma(s):=\int_{0}^{\infty}t^{s-1}e^{-t}\textup{d}t}$ denotes the usual (analytically continued) Gamma function.\\

We then have the following main Theorem.
\begin{theorem}\label{theorem2}
For a Hermitian cusp form $F\in S_2^k$, which is an eigenfunction for all Hecke operators of the ring $H(\Gamma_2, S^2)$ and has first Fourier-Jacobi coefficient $\phi_1 \not\equiv 0$, the function $Z_{F}^{*}(s,\chi)$ can be holomorphically continued to the whole complex plane, except possibly for some simple poles at $s=k, k-1, k-2, k-3$, which are only possible when $\chi\widetilde{\chi}$ is principal. In either case, it satisfies the functional equation
\begin{equation*}
    Z_{F}^{*}(2k-3-s,\chi) = Z_{F}^{*}(s,\chi).
\end{equation*}
\end{theorem}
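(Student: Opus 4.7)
The plan is to bootstrap Theorem~\ref{theorem2} from Proposition~\ref{theorem1} together with the meromorphic continuation and functional equation of the twisted Dirichlet series $D_{F,G}^{*}(s,\chi)$ proved in \cite[Theorem~4.1]{das_jha}. The strategy: scalarize the identity of Proposition~\ref{theorem1} via a Petersson inner product, identify the resulting series with the Das--Jha completed Dirichlet series for a suitable $G\in S_2^k$, and transfer their analytic properties.

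Both sides of Proposition~\ref{theorem1} are $J_{k,1}$-valued Dirichlet series, the right-hand side being $\phi_1$ times the scalar series $\zeta_K(s-k+2,\chi\widetilde{\chi})\,Z_F^{(2)}(s,\chi)$. I would take the Petersson inner product against $\phi_1$---which is legitimate since $\phi_1\not\equiv 0$ forces $\langle\phi_1,\phi_1\rangle\neq 0$---solve for $Z_F^{(2)}(s,\chi)$, and multiply through by the completion factor in the definition of $Z_F^{*}(s,\chi)$. A direct check shows that the auxiliary factors $L^{2}(s-k+2,\theta\chi)$, $L(2s-2k+4,\chi^{2})$, and one copy of $\zeta_K(s-k+2,\chi\widetilde{\chi})$ then all cancel, leaving
\begin{equation*}
\langle\phi_1,\phi_1\rangle\,Z_F^{*}(s,\chi) \;=\; \Lambda(s)\sum_{m=1}^{\infty}\chi(m)\,m^{-s}\,\langle\phi_m\mid_{k}T_{+}(m),\,\phi_1\rangle,
\end{equation*}
where $\Lambda(s):=N^{-4s}(4\pi^{3})^{-s}\Gamma(s)\Gamma(s-k+3)\Gamma(s-k+2)\zeta(2s-2k+4)\zeta_{K}(s-k+3,\chi\widetilde{\chi})$ is exactly the Das--Jha completion prefactor.

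The next step is to reinterpret the right-hand side as $D_{F,G}^{*}(s,\chi)$ for a specific cusp form $G\in S_2^k$. By Petersson adjointness combined with the antihomomorphism \eqref{antihomomorphism} relating the embeddings $j_{+}$ and $j_{-}$, one has $\langle\phi_m\mid_k T_{+}(m),\phi_1\rangle=\langle\phi_m,\phi_1\mid_k T_{-}(m)\rangle$ up to an explicit power of $m$ that is absorbed into the normalization. Setting $\psi_m:=\phi_1\mid_k T_{-}(m)\in J_{k,m}$, the Poincar\'e-series machinery of \cite{gritsenko} should ensure that $G(Z):=\sum_{m\geq 1}\psi_m(\tau,z_1,z_2)\,e(m\omega)$ defines a cusp form in $S_2^{k}$, and then the right-hand side above is precisely $D_{F,G}^{*}(s,\chi)$.

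The conclusion then follows from \cite[Theorem~4.1]{das_jha}: $D_{F,G}^{*}(s,\chi)$ is meromorphic on $\mathbb{C}$, is holomorphic away from the four simple poles in the principal character case (which originate from the Dedekind zeta factors in the completion and their images under the functional equation), and satisfies $D_{F,G}^{*}(2k-3-s,\chi)=D_{F,G}^{*}(s,\chi)$. Dividing by the nonzero constant $\langle\phi_1,\phi_1\rangle$ then transfers each of these properties to $Z_F^{*}(s,\chi)$. The main obstacle is the bookkeeping in the second step: the completion of $Z_F^{*}(s,\chi)$ has been calibrated specifically so that the auxiliary Dirichlet $L$-functions and the shifted Dedekind zeta cancel against each other, and verifying this calibration (together with the well-definedness of the auxiliary $G$) is where all the arithmetic is concentrated---once it works, the symmetry point $(2k-3)/2$ and the pole locations are forced by the Das--Jha side.
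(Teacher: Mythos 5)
Your proposal follows essentially the same route as the paper: pair the identity of Proposition~\ref{theorem1} against a nonzero index-one Jacobi cusp form (the paper uses a general $\psi_1$ with $\langle\phi_1,\psi_1\rangle\neq 0$, you take $\psi_1=\phi_1$), use the adjointness $T_{+}(m)^{*}=m^{3-k}T_{-}(m)$ from \cite[Proposition~5.1]{gritsenko} to recognize the resulting series as $D^{*}_{F,G}(s,\chi)$ for the lift $G(Z)=\sum_{m}m^{3-k}(\psi_1|_k T_{-}(m))e(m\omega)$, whose modularity is exactly \cite[Proposition~5.2]{gritsenko} (your ``Poincar\'e-series machinery''), and then quote \cite[Theorem~4.1]{das_jha}. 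The argument and the cancellation of the auxiliary $L$-factors are correct and match the paper's proof.
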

\begin{proof}
Let $\psi_1$ be a Jacobi cusp form of weight $k$ and index $1$. From \cite[Proposition 5.1]{gritsenko} we have that the adjoint operator of $T_{+}(m)$ with respect to the inner product of Jacobi forms is $m^{3-k}T_{-}(m)$. By now taking the inner product of the result of Proposition \ref{theorem1} with $\psi_1$, we obtain:
\begin{multline*}
     L^2(s-k+2,\theta\chi)L(2s-2k+4,\chi^2)\zeta_{K}(s-k+3,\chi\widetilde{\chi})\sum_{m=1}^{\infty}\langle \phi_m|_kT_{+}(m), \psi_1 \rangle \chi(m)m^{-s}
     \\= \zeta_{K}(s-k+2,\chi\widetilde{\chi})\langle \phi_1, 
     \psi_1 \rangle Z_{F}^{(2)}(s,\chi),
\end{multline*}
which is equivalent to
\begin{multline*}
     L^2(s-k+2,\theta\chi)L(2s-2k+4,\chi^2)\zeta_{K}(s-k+3,\chi\widetilde{\chi})\sum_{m=1}^{\infty}\langle \phi_m, \psi_1|_{k}m^{3-k}T_{-}(m) \rangle \chi(m)m^{-s}
     \\= \zeta_{K}(s-k+2,\chi\widetilde{\chi})\langle \phi_1, 
     \psi_1 \rangle Z_{F}^{(2)}(s,\chi),
\end{multline*}
after passing to the adjoint. But from \cite[Proposition 5.2]{gritsenko}, we have that the function
\begin{equation*}
    G_{\psi_1}\left(\begin{pmatrix} \tau & z_1\\ z_2 & \omega \end{pmatrix}\right) := \sum_{m \geq 1}m^{3-k}(\psi_1|_k T_{-}(m))(\tau, z_1, z_2)e(m\omega) 
\end{equation*}
is a Hermitian cusp form of degree $2$ and weight $k$ (and actually belongs in the Maass space, see \cite[Appendix]{gritsenko}). In particular, if $\{\psi_m\}_{m=1}^{\infty}$ are the Fourier-Jacobi coefficients of $G_{\psi_1}$, we have $\psi_m = m^{3-k}\psi_1 |_{k}T_{-}(m)$ for all $m \geq 1$. Therefore, after multiplying with $\zeta(2s-2k+4)$, the above can be written as
\begin{multline*}
    L^2(s-k+2,\theta\chi)L(2s-2k+4,\chi^2)D_{F,G_{\psi_1}}(s, \chi) =\\= \zeta(2s-2k+4)\zeta_{K}(s-k+2,\chi\widetilde{\chi})\langle \phi_1, 
     \psi_1 \rangle Z_{F}^{(2)}(s,\chi),
\end{multline*}
using \eqref{twisted dirichlet}. This is, in turn, equivalent to
\begin{equation*}
    D_{F, G_{\psi_1}}^{*}(s,\chi) = \langle \phi_1, \psi_1 \rangle Z_F^{*}(s,\chi),
\end{equation*}
using the expressions \eqref{completed zeta} and \eqref{completed dirichlet}. Therefore, because of the assumption $\phi_1 \not \equiv 0$, the analytic properties of $Z_{F}^{*}(s, \chi)$ coincide with those of $D_{F,G_{\psi_1}}^{*}(s,\chi)$, which are exactly the ones mentioned in the statement of the Theorem, using the result of Das and Jha in \cite[Theorem 4.1]{das_jha}.
\end{proof}
\renewcommand{\abstractname}{Acknowledgements}
\begin{abstract}
This work was supported by the Additional Funding Programme for Mathematical Sciences, delivered by EPSRC (EP/V521917/1) and the Heilbronn Institute for Mathematical Research as well as by a scholarship by Onassis Foundation. The author would also like to thank Prof. Thanasis Bouganis for his continuous guidance and support.
\end{abstract}
\printbibliography
\end{document}